\newtheorem{theorem}{Theorem}[]
\newtheorem{definition}[theorem]{Definition}
\newtheorem{problem}[theorem]{Problem}
\def\to{\rightarrow}
\def\f{\mathfrak}
\def\c{\mathcal}
\def\b{\mathbf}
\def\r{\mathrm}
\def\sl{\langle}
\def\sr{\rangle}
\def\ot{\otimes}
\begin{document}
\baselineskip15pt
\title[quantum metrics on noncommutative spaces]{Quantum metrics on noncommutative spaces}
\author[M. M. Sadr]{Maysam Maysami Sadr}
\address{Department of Mathematics\\
Institute for Advanced Studies in Basic Sciences\\
P.O. Box 45195-1159, Zanjan 45137-66731, Iran}
\email{sadr@iasbs.ac.ir}
\subjclass[2010]{46L52; 46L85; 81R60.}
\keywords{Compact quantum metric space; C*-algebra; enveloping von Neumann algebra.}
\begin{abstract}
We introduce two new formulations for the notion of ``quantum metric on noncommutative space''. For a compact noncommutative
space associated to a unital C*-algebra, our quantum metrics are elements of the spatial tensor product of the C*-algebra with itself.
We consider some basic properties of these new objects, and state some connections with the Rieffel theory of compact quantum metric spaces.
The main gap in our work is the lack of a nonclassical example even in the case of C*-algebras of matrices.
\end{abstract}
\maketitle
There are at least two mathematically rigorous formulations for the notion of ``quantum (noncommutative) metric space'' in the literatures.
The famous one is due to Rieffel, and the other has been recently introduced by G. Kuperberg and N. Weaver.
Following some ideas from Connes \cite[Chapter VI]{Connes1} in noncommutative Riemannian
geometry \cite{Connes2}, Rieffel has introduced the notions of ``compact quantum metric space'' and
``quantum Hausdorff-Gromov distance'' \cite{Rieffel2},\cite{Rieffel3},\cite{Rieffel1}.
In his theory, a compact quantum metric space $\f{q}\c{A}$ is identified with the state space of a unital C*-algebra $\c{A}$
(or more generally, with the state space of an order unit space) together with a weak*-compatible metric which must be induced by a
``Lipschitz seminorm'' on $\c{A}$ via Monge-Kantorovich's formula. Thus in the Rieffel theory the role of quantum metrics
is played by Lipschitz seminorms. In Kuperberg-Weaver theory \cite{KuperbergWeaver1} the noncommutative space is distinguished by a
von Neumann algebra $\c{M}\subseteq\b{L}(\c{H})$ and the role of quantum metric is played by a specific one-parameter family $\{\c{V}_t\}_{t\geq0}$
of weak closed operator systems in $\b{L}(\c{H})$ such that $\c{V}_0=\c{M}'$. This construction also can be characterized by a specific
``quantum distance function'' between projections of the von Neumann algebra $\c{M}\bar{\ot}\b{L}(\ell^2)$.
In this note, we introduce two new models for ``quantum metrics on noncommutative spaces''. Our formulations
are natural translations of the concept of ``(ordinary) metric'' into noncommutative geometric language.

For preliminaries on C* and von Neumann algebras we refer the reader to \cite{KadisonRingrose1} or \cite{Takesaki1}.
Let $\c{X}$ be a compact metrizable space. A function $\rho$ is a compatible metric on $\c{X}$  if and only if
$\rho\in\b{C}(\c{X}^2)=\b{C}(\c{X})\check{\ot}\b{C}(\c{X})$ and the following five conditions are satisfied for $x,y,z\in\c{X}$.
(i) $\rho(x,y)\geq0$. $(ii)$ $\rho(x,x)=0$. (iii) if $\rho(x,y)=0$ then $x=y$. (iv) $\rho(x,y)=\rho(y,x)$. (v) $\rho(x,y)\leq\rho(x,z)+\rho(z,y)$.
Let $\c{A}$ be a unital C*-algebra. Suppose that an element $\rho\in\c{A}\check{\ot}\c{A}$ deserves to be called a compatible
metric on $\f{q}\c{A}$. Then $\rho$ must be satisfied the following analogues of (i),(iv),(v).
\begin{enumerate}
\item[(i)$'$] $\rho\in(\c{A}\check{\ot}\c{A})^+$.
\item[(iv)$'$] $\f{F}(\rho)=\rho$, where $\f{F}:\c{A}\check{\ot}\c{A}\to\c{A}\check{\ot}\c{A}$ denotes the fillip.
\item[(v)$'$] $\f{M}(\rho)\leq \rho\ot1+1\ot\rho$, where $\f{M}:\c{A}\check{\ot}\c{A}\to\c{A}\check{\ot}\c{A}\check{\ot}\c{A}$
denotes the *-morphism that puts $1$ in the mid position, i.e. $\f{M}(a\ot b):=a\ot 1\ot b$ ($a,b\in\c{A}$).
\end{enumerate}
There are many ways to state the noncommutative analogues of (ii) and (iii). But it seems that the most effective and applicable
way is as follows. Let $\pi:\c{A}\to\b{L}(\c{H})$ denote a representation for $\c{A}$ by bounded operators on a Hilbert space $\c{H}$.
We say that $\pi$ is an \emph{atomic} representation if there is a family $\{\pi_i:\c{A}\to\b{L}(\c{H}_i)\}$ of
pairwise inequivalent irreducible representations of $\c{A}$ such that $\pi=\oplus \pi_i$. (Note that our atomic representations
are special cases of the atomic representations defined in \cite{Takesaki1}.)
Then it follows from \cite[Corollary 10.3.9]{KadisonRingrose1} that the enveloping von Neumann algebra $\pi(\c{A})''$ is equal to $\oplus_i\b{L}(\c{H}_i)$.
Let $\pi:\c{A}\to\b{L}(\c{H})$ be a faithful atomic representation of $\c{A}$. We consider $\c{A}$ as a subalgebra of $\b{L}(\c{H})$
and write $\c{A}''$ for $\pi(\c{A})''$. The \emph{characteristic function of the diagonal} (w.r.t. $\pi$) of $\f{q}\c{A}\times\f{q}\c{A}$
is denoted by $P_{\delta}$ and defined to be the supremum of the family of all projections of the form $p\ot p$ in $\c{A}''\bar{\ot}\c{A}''=(\c{A}\check{\ot}\c{A})''\subseteq\b{L}(\c{H}\bar{\ot}\c{H})$
such that $p\in\c{A}''$ is a minimal projection.
(In the classical case that $\c{A}=\b{C}(\c{X})$, if we choose $\pi$ to be the \emph{reduced atomic
representation} then $\pi(\b{C}(\c{X}))''$ is isomorphic to $\ell^\infty(\c{X})$ and $P_{\delta}$ is identified with the usual characteristic
function of the diagonal of $\c{X}\times\c{X}$.)
The analogues of (ii) and (iii) are as follows.
\begin{enumerate}
\item[(ii)$'$] $\rho P_{\delta}=P_{\delta}\rho=0$.
\item[(iii)$'$] Let $\c{H}_{\delta}$ denote the the image of the projection $P_{\delta}$ in $\c{H}\bar{\ot}\c{H}$.
Then, $0$ is not an eigenvalue of the operator $\rho|_{\c{H}_\delta^\perp}\in\b{L}(\c{H}_\delta^\perp)$.
\end{enumerate}
\begin{definition}\label{D1}
Let $\c{A}$ be a unital C*-algebra and let $\pi:\c{A}\to\b{L}(\c{H})$ be a faithful atomic representation.
A (compatible) quantum metric w.r.t. $\pi$ on $\f{q}\c{A}$ is an element $\rho\in\c{A}\check{\ot}\c{A}$ satisfying (i)$'$-(v)$'$.
In this case we call $(\c{A},\rho,\pi)$ a compact quantum metric space.
\end{definition}
Let $(\c{A},\rho,\pi)$ be a compact quantum metric space. Comparing with the classical case, it is natural that we consider the value $\|\rho\|$
as the diameter of $\rho$. It is clear that if $(\c{X},\rho)$ is an ordinary compact metric space then $(\b{C}(\c{X}),\rho,\pi)$
is a compact quantum metric space where $\pi$ is an arbitrary atomic representation of $\b{C}(\c{X})$.
(Indeed, it is easily checked that for any of such representation $\pi(\b{C}(\c{X}))''$ is isomorphic to $\ell^\infty(\c{X}_0)$
where $\c{X}_0$ is a dense subspace of $\c{X}$.)

We show that there is no quantum metric on the \emph{two-point noncommutative space} $\f{q}\b{M}_2$.
Let $\c{A}=\b{M}_2$ be the algebra of complex $2\times 2$ matrices. Then $\pi=\r{id}$ is an atomic representation of $\c{A}$ on $\c{H}=\mathbb{C}^2$.
Let $\{e_1,e_2\}$ (resp. $\{f_1,\cdots,f_4\}$) denote the Euclidean basis of $\mathbb{C}^2$ (resp. $\mathbb{C}^4$). We identify
$\mathbb{C}^2\ot\mathbb{C}^2$ with $\mathbb{C}^4$ via $e_1\ot e_1\mapsto f_1$, $e_2\ot e_2\mapsto f_4$, $e_1\ot e_2\mapsto f_2$,
$e_2\ot e_1\mapsto f_3$. Then $\b{M}_2\ot\b{M}_2$ is identified with $\b{M}_4$ via
$\sum_{i,j,k,\ell}\lambda_{ijk\ell}1_{ij}\ot1_{k\ell}\mapsto$
$$\left(\begin{array}{cccc}
  \lambda_{1111} & \lambda_{1112} & \lambda_{1211} & \lambda_{1212} \\
  \lambda_{1121} & \lambda_{1122} & \lambda_{1221} & \lambda_{1222} \\
  \lambda_{2111} & \lambda_{2112} & \lambda_{2211} & \lambda_{2212} \\
  \lambda_{2121} & \lambda_{2122} & \lambda_{2221} & \lambda_{2222} \\
\end{array}\right).$$
With these identifications, $P_\delta$ is the projection onto the linear subspace generated by $f_1,f_4,f_2+f_3$, and hence
$$P_\delta=\left(\begin{array}{cccc}
1 & 0 & 0 & 0 \\
0 & 1/2 & 1/2 & 0 \\
0 & 1/2 & 1/2 & 0 \\
0 & 0 & 0 & 1 \\
\end{array}\right).$$
Suppose that $\rho\in\b{M}_4$ satisfies (i)$'$,(ii)$'$,(iii)$'$,(iv)$'$. Then $\rho$ must be of the form
$$\rho=\left(\begin{array}{cccc}
0 & 0 & 0 & 0 \\
0 & \lambda & -\lambda & 0 \\
0 & -\lambda & \lambda & 0 \\
0 & 0 & 0 & 0\\
\end{array}\right),$$
for some real number $\lambda>0$. The $8\times 8$ matrix $M=\rho\ot1+1\ot\rho-\f{M}(\rho)$ is equal to
$$M=\lambda\left(\begin{array}{cccccccc}
  0 & 0 & 0 & 0 & 0 & 0 & 0 & 0 \\
  0 & 0 & -1 & 0 & 1 & 0 & 0 & 0 \\
  0 & -1 & 2 & 0 & -1 & 0 & 0 & 0 \\
  0 & 0 & 0 & 0 & 0 & -1 & 1 & 0 \\
  0 & 1 & -1 & 0 & 0 & 0 & 0 & 0 \\
  0 & 0 & 0 & -1 & 0 & 2 & -1 & 0 \\
  0 & 0 & 0 & 1 & 0 & -1 & 0 & 0 \\
  0 & 0 & 0 & 0 & 0 & 0 & 0 & 0
\end{array}\right).$$
For any vector $X=(x_1,\cdots,x_8)\in\mathbb{R}^8$ we have
$$\lambda^{-1}\sl MX,X\sr=(x_3-x_2-x_5)^2+(x_3^2-x_2^2-x_5^2)+(x_6-x_4-x_7)^2+(x_6^2-x_4^2-x_7^2).$$
Thus $M$ is not positive and hence $\rho$ dose not satisfy (v)$'$.

Although we just mentioned a negative result on the existence of quantum metrics but it seems that there must be a huge class of quantum
metrics on $\f{q}\b{M}_n$ for $n\geq3$. Indeed we invite and request the readers of this note, specially who are familiar with computational matrix-softwares,
to find some explicit examples of quantum metrics on $\f{q}\b{M}_n$.

Similar to the case of ordinary metric spaces we have the following three theorems.
\begin{theorem}\label{T1}
Let $\rho_1$ and $\rho_2$ be quantum metrics on $\f{q}\c{A}$ w.r.t. the same representation $\pi$ of $\c{A}$. Then, for every
positive real number $r$, $\rho_1+r\rho_2$ is a quantum metric on $\f{q}\c{A}$ w.r.t. $\pi$.
\end{theorem}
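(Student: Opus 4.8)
The plan is to verify the five conditions (i)$'$--(v)$'$ for $\rho:=\rho_1+r\rho_2$ one at a time, exploiting that four of them are preserved automatically by positive linear combinations and isolating (iii)$'$ as the only condition that requires a genuine argument.

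First I would dispatch the linear conditions. Since $\rho_1,\rho_2\in(\c{A}\check{\ot}\c{A})^+$ and $r>0$, closure of the positive cone under addition and positive scaling gives $\rho\in(\c{A}\check{\ot}\c{A})^+$, which is (i)$'$. The flip $\f{F}$ and the middle-insertion morphism $\f{M}$ are $*$-morphisms, hence in particular linear; thus $\f{F}(\rho)=\f{F}(\rho_1)+r\f{F}(\rho_2)=\rho_1+r\rho_2=\rho$ gives (iv)$'$, and $\rho P_\delta=\rho_1 P_\delta+r\rho_2 P_\delta=0$ together with the symmetric identity $P_\delta\rho=0$ gives (ii)$'$. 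For (v)$'$ I would add the two hypotheses $\f{M}(\rho_j)\leq\rho_j\ot1+1\ot\rho_j$ after scaling the second by $r>0$; linearity of $\f{M}$ and cone properties then yield $\f{M}(\rho)\leq\rho\ot1+1\ot\rho$.

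The substantive step is (iii)$'$. By (ii)$'$ each $\rho_j$, and hence $\rho$, commutes with $P_\delta$, annihilates $\c{H}_\delta$, and maps $\c{H}_\delta^\perp$ into itself, so the restrictions $\rho_j|_{\c{H}_\delta^\perp}$ and $\rho|_{\c{H}_\delta^\perp}$ are well-defined positive operators. Suppose $\xi\in\c{H}_\delta^\perp$ lies in the kernel of $\rho|_{\c{H}_\delta^\perp}$, so that $\rho\xi=0$. Pairing with $\xi$ gives $\sl\rho_1\xi,\xi\sr+r\sl\rho_2\xi,\xi\sr=0$; as both summands are nonnegative and $r>0$, each vanishes. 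For a positive operator, $\sl\rho_j\xi,\xi\sr=\|\rho_j^{1/2}\xi\|^2=0$ forces $\rho_j\xi=0$, so $\xi$ lies in the kernel of $\rho_1|_{\c{H}_\delta^\perp}$. Since $0$ is not an eigenvalue of $\rho_1|_{\c{H}_\delta^\perp}$ by hypothesis, $\xi=0$, which is exactly (iii)$'$ for $\rho$.

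The only place where the positivity hypothesis is genuinely used is (iii)$'$: the argument collapses if either $\rho_j$ fails to be positive, because then a nonzero kernel vector of the sum need not annihilate each summand. I therefore expect (iii)$'$ to be the crux, even though it is not technically hard; the remaining four conditions are formal consequences of linearity of $\f{F}$ and $\f{M}$ together with the cone structure.
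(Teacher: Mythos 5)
Your proof is correct and fills in, condition by condition, exactly the routine verification that the paper dismisses with the single word ``Straightforward''; the linear conditions (i)$'$, (ii)$'$, (iv)$'$, (v)$'$ follow from the cone structure and linearity as you say, and your kernel argument for (iii)$'$ (a zero of the positive sum $\rho_1+r\rho_2$ must be a common zero of $\rho_1$ and $\rho_2$) is the right and essentially only way to handle that condition. No gaps.
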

\begin{proof}
Straightforward.
\end{proof}
\begin{theorem}\label{T2}
Let $(\c{A}_1,\rho_1,\pi_1)$ and $(\c{A}_2,\rho_2,\pi_2)$ be compact quantum metric spaces
and let $r$ be a real number not less than $2^{-1}\max(\|\rho_1\|,\|\rho_2\|)$. Then
$(\c{A}_1\oplus\c{A}_2,\rho,\pi_1\oplus\pi_2)$ is a compact quantum metric space where
$\rho=\rho_1+\rho_2+r1_{\c{A}_1\check{\ot}\c{A}_2}+r1_{\c{A}_2\check{\ot}\c{A}_1}$.
\end{theorem}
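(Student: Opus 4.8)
The plan is to organise the whole verification around the two central projections $e_1:=1_{\c{A}_1}$ and $e_2:=1_{\c{A}_2}$ of $\c{A}:=\c{A}_1\oplus\c{A}_2$, which are orthogonal with $e_1+e_2=1$. First I would record the structural facts. The irreducible representations of $\c{A}$ are exactly those of $\c{A}_1$ and of $\c{A}_2$ composed with the coordinate projections, and those coming from different summands are inequivalent; hence $\pi:=\pi_1\oplus\pi_2$ is again a faithful atomic representation and $\c{A}''=\c{A}_1''\oplus\c{A}_2''$. The four central projections $e_i\ot e_j$ of $(\c{A}\check{\ot}\c{A})''$ sum to $1$, so a self-adjoint element is positive iff each of its four corners is. Because a minimal projection of $\c{A}''$ lies wholly in $\c{A}_1''$ or in $\c{A}_2''$, the family defining $P_\delta$ splits and I get $P_\delta=P_\delta^{(1)}\oplus0\oplus0\oplus P_\delta^{(2)}$, where $P_\delta^{(i)}$ is the characteristic function of the diagonal of $\f{q}\c{A}_i$ and the two mixed corners contribute $0$.

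Writing $\rho=\rho_1+\rho_2+r(e_1\ot e_2)+r(e_2\ot e_1)$ as a corner-diagonal element, conditions (i)$'$--(iv)$'$ come out immediately. Positivity (i)$'$ holds corner by corner, using $\rho_1,\rho_2\geq0$ and $r>0$; flip-invariance (iv)$'$ holds because $\f{F}$ fixes $\rho_1,\rho_2$ and interchanges the two mixed corners, which carry the same coefficient. For (ii)$'$ both $\rho$ and $P_\delta$ are corner-diagonal and in every corner one factor vanishes, so $\rho P_\delta=P_\delta\rho=0$. For (iii)$'$, $\c{H}_\delta^\perp$ is the orthogonal sum of $(\c{H}_\delta^{(1)})^\perp$, $(\c{H}_\delta^{(2)})^\perp$ and the mixed subspaces $\c{H}_1\bar{\ot}\c{H}_2$, $\c{H}_2\bar{\ot}\c{H}_1$; there $\rho$ restricts to $\rho_i|_{(\c{H}_\delta^{(i)})^\perp}$ (no kernel, by (iii)$'$ for $\rho_i$) and to $r\,\r{id}$ with $r>0$, so $0$ is not an eigenvalue.

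The content is the triangle inequality (v)$'$, which I would test corner by corner in $\c{A}^{\check{\ot}3}=(\c{A}_1\oplus\c{A}_2)^{\check{\ot}3}$, split by the eight central projections $e_i\ot e_j\ot e_k$. All of $\rho\ot1$, $1\ot\rho$ and $\f{M}(\rho)$ are corner-diagonal, so it suffices to compare components corner by corner; the one subtlety is that the inserted middle $1=e_1+e_2$ breaks $\f{M}(\rho_1)$ into a genuine middle-$e_1$ part (in corner $(1,1,1)$) and a middle-$e_2$ part $\sum_\alpha a_\alpha\ot e_2\ot b_\alpha$ (in corner $(1,2,1)$), and likewise for $\rho_2$. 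Carrying out the comparison, the four corners with distinct outer indices reduce to $0\leq\rho_1\ot e_j$ or $0\leq e_i\ot\rho_2$, i.e. to positivity of the $\rho_i$, while corners $(1,1,1)$ and $(2,2,2)$ reduce exactly to (v)$'$ for $\rho_1$ and $\rho_2$.

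The step I expect to be the main obstacle, and the only one that uses the bound on $r$, is the pair of corners $(1,2,1)$ and $(2,1,2)$ --- the noncommutative form of the classical requirement $\r{diam}\leq 2r$. In corner $(1,2,1)$ the inequality reads $\sum_\alpha a_\alpha\ot e_2\ot b_\alpha\leq 2r(e_1\ot e_2\ot e_1)$. I would finish it by noting that $x\ot y\mapsto x\ot e_2\ot y$ is a unital injective $*$-homomorphism of $\c{A}_1\check{\ot}\c{A}_1$ into this corner, hence isometric and positive; so the left-hand side is the image of $\rho_1$, a positive element of norm $\|\rho_1\|$, and is therefore dominated by $\|\rho_1\|(e_1\ot e_2\ot e_1)$. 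As $r\geq2^{-1}\|\rho_1\|$ gives $\|\rho_1\|\leq2r$, the bound follows, and corner $(2,1,2)$ is identical with $\rho_2$ in place of $\rho_1$. With all eight corners settled, $\rho\ot1+1\ot\rho-\f{M}(\rho)\geq0$, which is (v)$'$, and $(\c{A}_1\oplus\c{A}_2,\rho,\pi_1\oplus\pi_2)$ is a compact quantum metric space.
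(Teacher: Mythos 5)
Your proof is correct and follows essentially the same route as the paper: the paper's additive chain of inequalities for (v)$'$ is exactly your corner-by-corner comparison under the eight central projections, the key step being the bound $\f{M}(\rho_1)\le\f{M}_1(\rho_1)+2r\,1_{121}$ (your corners $(1,2,1)$ and $(2,1,2)$), which the paper merely asserts and you actually justify via the unital positive embedding $x\ot y\mapsto x\ot e_2\ot y$. One small slip: for (ii)$'$ in the diagonal corners $(1,1)$ and $(2,2)$ it is not true that ``one factor vanishes'' --- there you must invoke (ii)$'$ for $\rho_1$ and $\rho_2$ themselves, which your setup of course provides.
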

\begin{proof}
The conditions (i)$'$ and (iv)$'$ are trivial for $\rho$. (ii)$'$ and (iii)$'$ follows from the fact that
any minimal projection in $(\pi_1\oplus\pi_2)(\c{A}_1\oplus\c{A}_2)''=\pi_1(\c{A}_1)''\oplus\pi_2(\c{A}_2)''$ is a minimal projection of
$\pi_1(\c{A}_1)''$ or of $\pi_2(\c{A}_2)''$. Let $\f{M}$,$\f{M}_1$,$\f{M}_2$ denote the corresponding morphisms as in (v)$'$ respectively for
$\c{A}_1\oplus\c{A}_2$,$\c{A}_1$,$\c{A}_2$. With the notation $1_{ijk}:=1_{\c{A}_i\check{\ot}\c{A}_j\check{\ot}\c{A}_k}$ we have,
$$\f{M}(\rho_1)\leq \f{M}_1(\rho_1)+2r1_{121},\hspace{5mm}
\f{M}(\rho_2)\leq \f{M}_2(\rho_2)+2r1_{212}.$$
It follows that
\begin{align*}
\f{M}(\rho)&=\f{M}(\rho_1)+\f{M}(\rho_2)+r1_{112}+r1_{122}+r1_{211}+r1_{221}\\
&\leq \f{M}_1(\rho_1)+\f{M}_2(\rho_2)+2r1_{121}+2r1_{212}+r1_{112}+r1_{122}+r1_{211}+r1_{221}\\
&\leq \rho_1\ot1_1+1_1\ot\rho_1+\rho_2\ot1_2+1_2\ot\rho_2\\
&\hspace{5mm}+2r1_{121}+2r1_{212}+r1_{112}+r1_{122}+r1_{211}+r1_{221}\\
&\leq \rho_1\ot1_1+\rho_1\ot1_2+\rho_2\ot1_1+\rho_2\ot1_2+r1_{121}+r1_{122}+r1_{211}+r1_{212}\\
&\hspace{5mm}+1_1\ot\rho_1+1_2\ot\rho_1+1_1\ot\rho_2+1_2\ot\rho_2+r1_{112}+r1_{212}+r1_{121}+r1_{221}\\
&=\rho\ot1+1\ot\rho.
\end{align*}
\end{proof}
\begin{theorem}\label{T3}
Let $(\c{A}_1,\rho_1,\pi_1)$ and $(\c{A}_2,\rho_2,\pi_2)$ be compact quantum metric spaces.
Then $(\c{A}_1\check{\ot}\c{A}_2,\rho,\pi_1\ot\pi_2)$ is a compact quantum metric space where
$$\rho=(\rho_1\ot1_{\c{A}_2\check{\ot}\c{A}_2}+1_{\c{A}_1\check{\ot}\c{A}_1}\ot\rho_2)
\in(\c{A}_1\check{\ot}\c{A}_1)\check{\ot}(\c{A}_2\check{\ot}\c{A}_2)\cong(\c{A}_1\check{\ot}\c{A}_2)\check{\ot}(\c{A}_1\check{\ot}\c{A}_2).$$
\end{theorem}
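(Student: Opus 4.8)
The plan is to verify the five conditions (i)$'$--(v)$'$ for $\rho$ by reducing each to the corresponding property of $\rho_1$ and $\rho_2$, working inside the enveloping von Neumann algebra. Write $\c{A}=\c{A}_1\check{\ot}\c{A}_2$ and $\c{M}_k=\pi_k(\c{A}_k)''=\oplus_i\b{L}(\c{H}_{ki})$. First I would check that $\pi_1\ot\pi_2$ is again faithful and atomic: a minimal tensor product of pairwise inequivalent irreducibles is irreducible and the resulting family stays pairwise inequivalent. Hence $\c{A}''=\c{M}_1\bar{\ot}\c{M}_2$, and under the canonical flip of the two middle legs one gets $(\c{A}\check{\ot}\c{A})''\cong(\c{M}_1\bar{\ot}\c{M}_1)\bar{\ot}(\c{M}_2\bar{\ot}\c{M}_2)$, with $\rho=\rho_1\ot1+1\ot\rho_2$ where $\rho_1$ sits on the two $\c{M}_1$-legs and $\rho_2$ on the two $\c{M}_2$-legs.

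Conditions (i)$'$ and (iv)$'$ are then immediate: $\rho$ is a sum of two positive elements, and after the rearrangement the flip $\f{F}$ for $\c{A}$ splits as the flips for $\c{A}_1$ and $\c{A}_2$, which fix $\rho_1$ and $\rho_2$ respectively. For (v)$'$ I would argue exactly as in Theorem \ref{T2}: the morphism $\f{M}$ for $\c{A}$ factors under the rearrangement as $\f{M}_1\ot\f{M}_2$, so $\f{M}(\rho)=\f{M}_1(\rho_1)\ot1+1\ot\f{M}_2(\rho_2)$, and tensoring the two triangle inequalities $\f{M}_k(\rho_k)\leq\rho_k\ot1+1\ot\rho_k$ with the appropriate positive identities and adding them yields $\f{M}(\rho)\leq\rho\ot1+1\ot\rho$.

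Everything then hinges on a single identity for the diagonal characteristic functions, namely
$$P_\delta=P_{\delta,1}\ot P_{\delta,2}$$
under the above rearrangement, where $P_{\delta,k}$ (with image $\c{H}_{\delta,k}$) is the characteristic function of the diagonal for $(\c{A}_k,\rho_k,\pi_k)$. Granting it, (ii)$'$ is immediate from $(\rho_1\ot1)(P_{\delta,1}\ot P_{\delta,2})=(\rho_1P_{\delta,1})\ot P_{\delta,2}=0$ and its partner, while (iii)$'$ follows because, for positive operators, $\ker\rho=\ker(\rho_1\ot1)\cap\ker(1\ot\rho_2)=(\c{H}_{\delta,1}\ot(\c{H}_2\bar{\ot}\c{H}_2))\cap((\c{H}_1\bar{\ot}\c{H}_1)\ot\c{H}_{\delta,2})=\c{H}_{\delta,1}\ot\c{H}_{\delta,2}=\c{H}_\delta$, so that $\rho$ is injective on $\c{H}_\delta^\perp$.

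I expect the proof of the displayed identity to be the main obstacle, and it is genuinely delicate. A minimal projection of $\c{M}_1\bar{\ot}\c{M}_2$ is the rank-one projection onto a unit vector $u\in\c{H}_{1i}\ot\c{H}_{2j}$, which need \emph{not} be a simple tensor; consequently $P_\delta$ projects onto $\oplus_{i,j}\mathrm{Sym}^2(\c{H}_{1i}\ot\c{H}_{2j})$, whereas $P_{\delta,1}\ot P_{\delta,2}$ projects onto $\oplus_{i,j}(\mathrm{Sym}^2\c{H}_{1i}\ot\mathrm{Sym}^2\c{H}_{2j})$. Since $\mathrm{Sym}^2(V\ot W)\cong(\mathrm{Sym}^2V\ot\mathrm{Sym}^2W)\oplus(\wedge^2V\ot\wedge^2W)$, the two agree exactly when the correction $\wedge^2\c{H}_{1i}\ot\wedge^2\c{H}_{2j}$ vanishes for all $i,j$, i.e. when at least one of $\c{A}_1,\c{A}_2$ has only one-dimensional irreducible representations (in particular whenever one factor is commutative, which covers the classical product metric). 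The key step is therefore to establish the identity in the needed generality; when both factors admit an irreducible of dimension at least two it genuinely fails on the extra $\wedge^2\ot\wedge^2$ summand, and one must either restrict the hypotheses accordingly or modify $\rho$ so that it also annihilates that summand.
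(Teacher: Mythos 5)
Your reduction of (i)$'$, (iv)$'$ and (v)$'$ to the corresponding properties of $\rho_1$ and $\rho_2$ via the rearrangement of the middle legs is correct, as is the observation that $\pi_1\ot\pi_2$ is again a faithful atomic representation. The paper offers nothing beyond the word ``Straightforward,'' so there is no argument to compare yours against; but the issue you isolate concerning the diagonal projection is a genuine gap in the theorem as stated, not a defect of your proposal. Your computation is right: a minimal projection of $\c{M}_1\bar{\ot}\c{M}_2=\oplus_{i,j}\b{L}(\c{H}_{1i}\ot\c{H}_{2j})$ is the rank-one projection onto an \emph{arbitrary} unit vector of some $\c{H}_{1i}\ot\c{H}_{2j}$, so the $(i,j)$-block of $P_\delta$ projects onto $\mathrm{Sym}^2(\c{H}_{1i}\ot\c{H}_{2j})$, which exceeds $\mathrm{Sym}^2\c{H}_{1i}\ot\mathrm{Sym}^2\c{H}_{2j}$ by the summand $\wedge^2\c{H}_{1i}\ot\wedge^2\c{H}_{2j}$ as soon as both dimensions are at least $2$: for $u=v_1\ot w_1+v_2\ot w_2$ with $v_1\perp v_2$ and $w_1\perp w_2$, the rearranged $u\ot u$ contains the nonzero component $\frac12(v_1\ot v_2-v_2\ot v_1)\ot(w_1\ot w_2-w_2\ot w_1)$. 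Since positivity gives $\ker\rho=\ker(\rho_1\ot1)\cap\ker(1\ot\rho_2)=\c{H}_{\delta,1}\ot\c{H}_{\delta,2}$, the operator $\rho$ is injective on that extra summand even though it lies inside $\c{H}_\delta$; hence (ii)$'$ itself fails (not merely this method of proving it) whenever both $\c{A}_1$ and $\c{A}_2$ admit an irreducible representation of dimension at least $2$, i.e.\ whenever both are noncommutative.

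Two remarks to situate this. First, the hypothesis you suggest adding is exactly the one the author does impose in the algebraic analogue, Theorem \ref{T5}, where $\c{A}_1$ is assumed commutative; with the same assumption added to Theorem \ref{T3}, every $\c{H}_{1i}$ is one-dimensional, each $\wedge^2\c{H}_{1i}$ vanishes, the identity $P_\delta=P_{\delta,1}\ot P_{\delta,2}$ holds, and your argument becomes a complete proof. Second, because of Problem \ref{P1} no example with both factors noncommutative is currently known, so the theorem as printed has no known counterexample; but it is certainly not ``straightforward'' in the stated generality, and your analysis shows that any proof covering the general case would have to modify $\rho$ (so that it also annihilates the $\wedge^2\ot\wedge^2$ summands) rather than use the product element given in the statement.
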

\begin{proof}
Straightforward.
\end{proof}
Let us now consider some relations between our model of ``compact quantum metric space'' and the other model introduced by Rieffel \cite{Rieffel1}.
Let $(\c{A},\rho,\pi)$ be a compact quantum metric space. We are able to define a new seminorm on $\c{A}$ which generalizes the Lipschitz
seminorm for continuous functions on an ordinary metric space. Let $\c{H}$ denote the Hilbert space of $\pi$ and let $\c{H}_\delta$
be as in (iii)$'$. Let $\rho^{-1}$ denote the inverse of the operator $\rho|_{\c{H}_\delta^\perp}\in\b{L}(\c{H}_\delta^\perp)$.
For any $a\in\c{A}$, the Lipschitz seminorm $\|a\|_{Lip}$ are defined to be the (possibly infinite) value $\|(a\ot1-1\ot a)\rho^{-1}\|$,
that is the operator norm of $(a\ot1-1\ot a)\rho^{-1}$ as an operator from the image of $\rho|_{\c{H}_\delta^\perp}$ into $\c{H}\bar{\ot}\c{H}$.
For $a,b\in\c{A}$ with $ab=ba$ the Leibnitz rule is satisfied:
\begin{align*}
\|ab\|_{Lip}&=\|(ab\ot1-1\ot ab)\rho^{-1}\|\\
&=\|(ab\ot1-a\ot b+a\ot b-1\ot ab)\rho^{-1}\|\\
&=\|[(a\ot1)(b\ot 1-1\ot b)+(a\ot1-1\ot a)(1\ot b)]\rho^{-1}\|\\
&\leq\|(a\ot1)(b\ot 1-1\ot b)\rho^{-1}\|+\|(1\ot b)(a\ot1-1\ot a)\rho^{-1}\|\\
&\leq\|a\|\|b\|_{Lip}+\|a\|_{Lip}\|b\|
\end{align*}
Also it is clear that for any normal element $a$ we have $\|a\|_{Lip}=\|a^*\|_{Lip}$. The seminorm $\|\cdot\|_{Lip}$ gives rise to a semimetric
on the state space $S(\c{A})$ of $\c{A}$ via Monge-Kantorovich formula:
$$d(\phi,\psi):=\sup_{a^*=a,\|a\|_{Lip}\leq1}|\sl\phi-\psi,a\sr|\hspace{5mm}(\phi,\psi\in S(\c{A})).$$
We give an upper bound for $d(\phi,\psi)$ in the case that $\phi$ and $\psi$ are some special pure states of $\c{A}$: Let $\pi$
be the direct sum of $\{\pi_i:\c{A}\to\b{L}(\c{H}_i)\}$. Suppose that $i\neq j$ and let $v$ and $w$ be two unit vectors respectively in $\c{H}_i$
and $\c{H}_j$. Let $\phi$ and $\psi$ be pure states on $\c{A}$ defined  respectively by $a\mapsto\sl\pi_i(a)v,v\sr$ and $a\mapsto\sl\pi_j(a)w,w\sr$.
Let $a$ be a self-adjoint element of $\c{A}$ with $\|a\|_{Lip}\leq1$. Since $v\ot w\in\c{H}_\delta^\perp$, we have
$\|a(v)\ot w-v\ot a(w)\|\leq\|\rho(v\ot w)\|$. Thus,
\begin{align*}
|\sl\phi-\psi,a\sr|^2&=\sl a(v),v\sr^2+\sl a(w),w\sr^2-2\sl a(v),v\sr\sl a(w),w\sr\\
&\leq\sl a(v),a(v)\sr+\sl a(w),a(w)\sr-2\sl a(v),v\sr\sl a(w),w\sr\\
&=\sl a(v),a(v)\sr\sl w,w\sr+\sl a(w),a(w)\sr\sl v,v\sr-2\sl a(v),v\sr\sl a(w),w\sr\\
&=\|a(v)\ot w-v\ot a(w)\|^2\leq\|\rho(v\ot w)\|^2.
\end{align*}
This shows that $d(\phi,\psi)\leq\|\rho(v\ot w)\|$.

As we saw above the most problematic part of the definition of a quantum metric is the translation of conditions (ii) and (iii).
We now translate these conditions in another way where there is no using of enveloping von Neumann algebras.
Let $\c{A}$ be a unital \emph{spatially continuous multiplication} C*-algebra, that means the multiplication of $\c{A}$, $m:a\ot b\mapsto ab$,
is continuous w.r.t. spatial tensor norm (e.g. $\c{A}$ is abelian or finite dimensional). For $\rho\in\c{A}\check{\ot}\c{A}$ satisfying
(i)$'$, consider the following conditions.
\begin{enumerate}
\item[(ii)$''$] $m(\rho)=0$.
\item[(iii)$''$] For every positive element $\nu\in\c{A}\check{\ot}\c{A}$ with $m(\nu)=1$ and $\f{F}(\nu)=\nu$,
the element $\rho+\nu$ is invertible in $\c{A}\check{\ot}\c{A}$.
\end{enumerate}
In the case that $\c{A}=\b{C}(\c{X})$ it is easily checked that these conditions coincide with (ii),(iii).
\begin{definition}\label{D2}
Let $\c{A}$ be a unital spatially continuous multiplication C*-algebra. An element $\rho\in\c{A}\check{\ot}\c{A}$ which satisfies (i)$'$,(ii)$''$,
(iii)$''$,(iv)$'$,(v)$'$ is called an algebraic (compatible) quantum metric on $\f{q}\c{A}$. In this case $(\c{A},\rho)$ is called an algebraic compact
quantum metric space.
\end{definition}
\begin{theorem}\label{T4}
Let $(\c{A}_1,\rho_1)$ and $(\c{A}_2,\rho_2)$ be algebraic compact quantum metric spaces.
Then $(\c{A}_1\oplus\c{A}_2,\rho)$ is an algebraic compact quantum metric space where
$\rho$ is as in Theorem \ref{T2}.
\end{theorem}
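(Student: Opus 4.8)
The plan is to reduce the verification to the two genuinely new conditions (ii)$''$ and (iii)$''$, since everything else is inherited from Theorem \ref{T2}. Indeed, the conditions (i)$'$, (iv)$'$ and (v)$'$ for $\rho$ are precisely what the computation in the proof of Theorem \ref{T2} establishes, and that computation never invokes the representation $\pi_1\oplus\pi_2$; so those three may be cited verbatim. One should also record at the outset that $\c{A}_1\oplus\c{A}_2$ is spatially continuous multiplication: its multiplication map decomposes as $m_1\oplus m_2$ with the cross terms vanishing, and $m_1,m_2$ are bounded. Thus it remains only to check (ii)$''$ and (iii)$''$.

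Throughout I would work with the C*-algebra isomorphism $(\c{A}_1\oplus\c{A}_2)\check{\ot}(\c{A}_1\oplus\c{A}_2)\cong\bigoplus_{i,j}\c{A}_i\check{\ot}\c{A}_j$, under which $\rho$ has the four blocks $\rho_1$, $r1_{12}$, $r1_{21}$, $\rho_2$ and an arbitrary element $\nu$ has blocks $\nu_{ij}\in\c{A}_i\check{\ot}\c{A}_j$. The two structural facts that drive the argument are that, in a finite direct sum of C*-algebras, both positivity and invertibility are detected blockwise. For (ii)$''$ I note that the multiplication map $m$ sends each cross block $\c{A}_1\check{\ot}\c{A}_2$ and $\c{A}_2\check{\ot}\c{A}_1$ to $0$ (the product in $\c{A}_1\oplus\c{A}_2$ of an element of $\c{A}_1$ with an element of $\c{A}_2$ vanishes) and restricts to $m_i$ on the diagonal blocks; hence $m(\rho)=(m_1(\rho_1),m_2(\rho_2))=0$ by (ii)$''$ for $\rho_1$ and $\rho_2$.

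For (iii)$''$, let $\nu\geq0$ satisfy $\f{F}(\nu)=\nu$ and $m(\nu)=1$. Blockwise positivity gives each $\nu_{ij}\geq0$; the flip $\f{F}$ permutes the blocks, so $\f{F}(\nu)=\nu$ forces $\f{F}_i(\nu_{ii})=\nu_{ii}$ on each diagonal block; and $m(\nu)=1$ forces $m_i(\nu_{ii})=1_{\c{A}_i}$. Thus each $\nu_{ii}$ meets exactly the hypotheses of condition (iii)$''$ for $(\c{A}_i,\rho_i)$, so $\rho_i+\nu_{ii}$ is invertible in $\c{A}_i\check{\ot}\c{A}_i$. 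On the cross blocks the relevant component of $\rho+\nu$ is $r1_{ij}+\nu_{ij}\geq r1_{ij}$, which is invertible because $r>0$ and $\nu_{ij}\geq0$. Since all four blocks of $\rho+\nu$ are invertible, so is $\rho+\nu$, which gives (iii)$''$.

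The main obstacle is conceptual rather than computational: one must check that the constraints imposed on $\nu$ by positivity, by the flip, and by the multiplication map descend correctly onto each summand, so that the hypothesis (iii)$''$ for the pieces applies; and one must observe that the off-diagonal blocks are controlled not by any metric property but purely by the strict positivity of the shift $r$. This last point also pinpoints why $r>0$ is essential and why the degenerate choice $r=0$ (permitted by the bound of Theorem \ref{T2} only when $\rho_1=\rho_2=0$) must be excluded: without it the cross blocks $r1_{ij}+\nu_{ij}$ could fail to be invertible, exactly as the zero ``metric'' fails (iii)$''$ on a two-point space.
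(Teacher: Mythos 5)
Your proof of (iii)$''$ is essentially identical to the paper's: the same blockwise decomposition $\c{A}\check{\ot}\c{A}=\oplus_{i,j}\c{A}_i\check{\ot}\c{A}_j$, the same descent of positivity, $m(\nu)=1$ and $\f{F}(\nu)=\nu$ to the diagonal blocks so that (iii)$''$ for the summands applies, and the same observation that the cross blocks $r1_{ij}+\nu_{ij}$ are invertible by positivity of $r$; you merely spell out (ii)$''$ and the other conditions that the paper dismisses as ``easily checked.'' Your remark that $r>0$ is genuinely needed (and that the degenerate case $\rho_1=\rho_2=0$, $r=0$ --- e.g.\ $\c{A}_1=\c{A}_2=\mathbb{C}$ --- actually falsifies the statement as written) is a correct and worthwhile observation that the paper's proof silently assumes away.
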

\begin{proof}
We only show that $\rho$ satisfies (iii)$''$. The other conditions are easily checked. Let $\c{A}:=\c{A}_1\oplus\c{A}_2$.
Let $m,m_1,m_2$ denote respectively the multiplications of $\c{A},\c{A}_1,\c{A}_2$ and let $\f{F},\f{F}_1,\f{F}_2$ denote the corresponding
fillips as in (iv)$'$. We have $\c{A}\check{\ot}\c{A}=\oplus_{i,j=1,2}\c{A}_i\check{\ot}\c{A}_j$. Let $\nu$ be a positive element of
$\c{A}\check{\ot}\c{A}$ with $m(\nu)=1$ and $\f{F}(\nu)=\nu$. Let $\nu_{ij}\in\c{A}_i\check{\ot}\c{A}_j$ be such that $\nu=\sum\nu_{ij}$.
Then $\nu_{ij}$ is positive and we have $m_i(\nu_{ii})=1_{\c{A}_1}$ and $\f{F}_i(\nu_{ii})=\nu_{ii}$. It follows that $\rho_i+\nu_{ii}$
is invertible in $\c{A}_i\check{\ot}\c{A}_i$, and $r1_{\c{A}_i}\ot1_{\c{A}_j}+\nu_{ij}$ is invertible in $\c{A}_i\check{\ot}\c{A}_j$ for $i\neq j$.
Thus $\rho+\nu$ is invertible in $\c{A}\check{\ot}\c{A}$.
\end{proof}
\begin{theorem}\label{T5}
Let $(\c{A}_1,\rho_1)$ and $(\c{A}_2,\rho_2)$ be algebraic compact quantum metric spaces such that $\c{A}_1$ is commutative.
Then $(\c{A}_1\check{\ot}\c{A}_2,\rho)$ is an algebraic compact quantum metric space where $\rho$ is as in Theorem \ref{T3}.
\end{theorem}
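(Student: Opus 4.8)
The plan is to verify conditions (i)$'$, (ii)$''$, (iii)$''$, (iv)$'$, (v)$'$ of Definition \ref{D2} for $\rho$ on $\c{A}:=\c{A}_1\check{\ot}\c{A}_2$. Since $\c{A}_1$ is commutative, I would first write $\c{A}_1=\b{C}(\c{X})$; then $\c{A}_1\check{\ot}\c{A}_1=\b{C}(\c{X}^2)$, the element $\rho_1$ is an ordinary compatible metric $d_1$ on $\c{X}$ (the conditions coincide with the classical ones in the commutative case), and combining the isomorphism of Theorem \ref{T3} with $\b{C}(\c{X}^2)\check{\ot}\c{B}_2\cong\b{C}(\c{X}^2,\c{B}_2)$, where $\c{B}_2:=\c{A}_2\check{\ot}\c{A}_2$, gives $\c{A}\check{\ot}\c{A}\cong\b{C}(\c{X}^2,\c{B}_2)$. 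Under this identification $m$ and $\f{F}$ become $f\mapsto(x\mapsto m_2(f(x,x)))$ and $f\mapsto((x,y)\mapsto\f{F}_2(f(y,x)))$; in particular $\|m\|\le\|m_2\|$, so $\c{A}$ has spatially continuous multiplication and Definition \ref{D2} applies. Conditions (i)$'$, (iv)$'$ and (v)$'$ involve only requirements shared with Definition \ref{D1} and depend solely on the corresponding properties of $\rho_1,\rho_2$; they are therefore verified exactly as in the proof of Theorem \ref{T3}. Condition (ii)$''$ is the one-line computation $m(\rho)=m_1(\rho_1)\ot 1_{\c{A}_2}+1_{\c{A}_1}\ot m_2(\rho_2)=0$, using (ii)$''$ for $\rho_1$ and $\rho_2$.

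The real content is (iii)$''$. In the above picture $\rho$ is the function $(x,y)\mapsto d_1(x,y)1_{\c{B}_2}+\rho_2$, while a positive $\nu$ with $m(\nu)=1$ and $\f{F}(\nu)=\nu$ becomes a continuous, pointwise-positive function $\nu(\cdot,\cdot)\colon\c{X}^2\to\c{B}_2$ satisfying $m_2(\nu(x,x))=1_{\c{A}_2}$ for all $x$ and $\f{F}_2(\nu(y,x))=\nu(x,y)$. Because $\c{X}^2$ is compact, an element of $\b{C}(\c{X}^2,\c{B}_2)$ is invertible precisely when it is invertible pointwise, so it suffices to prove that $d_1(x,y)1+\rho_2+\nu(x,y)$ is invertible in $\c{B}_2$ for every $(x,y)$.

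Here I would split into two cases. If $x=y$, then $d_1(x,x)=0$ and $w:=\nu(x,x)$ is positive, $\f{F}_2$-invariant and satisfies $m_2(w)=1_{\c{A}_2}$; thus $w$ is exactly an admissible test element for condition (iii)$''$ of $(\c{A}_2,\rho_2)$, whence $\rho_2+w$ is invertible. If $x\neq y$, then $d_1(x,y)>0$ because $d_1$ is a genuine metric, and $d_1(x,y)1+\rho_2+\nu(x,y)\ge d_1(x,y)1>0$ is a positive element bounded below, hence invertible. This covers all points and finishes (iii)$''$.

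The main obstacle is precisely the organization of (iii)$''$: transporting $m$, $\f{F}$, positivity and the invertibility test correctly through the commutative-base identification, and recognizing that on the diagonal the restricted test element $\nu(x,x)$ is exactly of the form to which condition (iii)$''$ for $\rho_2$ applies, whereas off the diagonal the strict positivity of $d_1$ does all the work. Everything else is routine.
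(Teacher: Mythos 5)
Your proof is correct, and at the decisive step it diverges from --- and in fact repairs --- the paper's own argument. Both proofs pass to the Gelfand picture $\c{A}\check{\ot}\c{A}\cong\b{C}(\c{X}\times\c{X},\c{A}_2\check{\ot}\c{A}_2)$ and reduce (iii)$''$ to pointwise invertibility of $(x,y)\mapsto\rho_1(x,y)1+\rho_2+\nu(x,y)$. The paper, however, asserts that $m(\nu)=1$ forces $m_2(\nu(x,y))=1_{\c{A}_2}$ for \emph{every} pair $(x,y)$ and concludes that $1\ot\rho_2+\nu$ is already invertible. Since $m(\nu)(x)=m_2(\nu(x,x))$, the hypothesis $m(\nu)=1$ only controls the diagonal values of $\nu$; off the diagonal $\nu(x,y)$ may well vanish (take $\nu=g\cdot 1_{\c{A}_2\check{\ot}\c{A}_2}$ with $g$ a symmetric positive scalar function equal to $1$ on the diagonal and $0$ at some off-diagonal point), in which case $\rho_2+\nu(x,y)=\rho_2$ need not be invertible --- and never is when $\c{A}_2$ is commutative, since then $\rho_2$ vanishes on the diagonal of its spectrum. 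So the paper's intermediate claim is unjustified. Your case split supplies exactly what is missing: on the diagonal $\nu(x,x)$ is a legitimate test element for condition (iii)$''$ of $(\c{A}_2,\rho_2)$, while off the diagonal the strict positivity $\rho_1(x,y)>0$ of the classical metric $\rho_1$ bounds the positive element $\rho_1(x,y)1+\rho_2+\nu(x,y)$ below by $\rho_1(x,y)1$, giving invertibility. Tellingly, yours is the only one of the two arguments that actually uses the hypotheses on $\rho_1$ (and hence the commutativity of $\c{A}_1$) in verifying (iii)$''$. Your supplementary checks --- that $\c{A}_1\check{\ot}\c{A}_2$ has spatially continuous multiplication, the transport of $m$ and $\f{F}$ through the identification, and the equivalence of invertibility with pointwise invertibility over the compact base --- are correct and are points the paper passes over in silence.
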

\begin{proof}
We only show that $\rho$ satisfies (iii)$'$. The other conditions are easily checked.
Let $m,m_2$ denote respectively the multiplications of $\c{A},\c{A}_2$ and let $\f{F},\f{F}_2$ denote the corresponding
fillips as in (iv)$'$. Let $\c{X}$ denote the Gelfand spectrum of $\c{A}_1$. Thus $\c{A}_1\cong\b{C}(\c{X})$ and
$\c{A}_1\check{\ot}\c{A}_2\cong\b{C}(\c{X},\c{A}_2)$, the algebra of $\c{A}_2$ valued continuous functions on $\c{X}$.
Let $\nu\in\b{C}(\c{X}\times\c{X},\c{A}_2\check{\ot}\c{A}_2)$ be a positive element with $m(\nu)=1$ and $\f{F}(\nu)=\nu$.
Then for every $x,y\in\c{X}$, $\nu(x,y)$ is positive,
$m_2(\nu(x,y))=1_{\c{A}_2}$, and $\f{F}_2(\nu(x,y))=\nu(x,y)$. Thus $\rho_2+\nu(x,y)$ is invertible in $\c{A}_2\check{\ot}\c{A}_2$.
It follows that $1_{\c{A}_1\check{\ot}\c{A}_1}\ot\rho_2+\nu$ (which is equal to the function  $(x,y)\mapsto\rho_2+\nu(x,y)$)
is invertible. Thus $\rho+\nu$ is also invertible.
\end{proof}
The main gap in our work is the lack of a nonclassical example:
\begin{problem}\label{P1}
Give an example of a nonclassical (algebraic) quantum metric.
\end{problem}
\bibliographystyle{amsplain}

\end{document}